\newtheorem{theorem}{Theorem}[section]
\newtheorem{lemma}[theorem]{Lemma}
\newtheorem{proposition}[theorem]{Proposition}
\newtheorem{definition}[theorem]{Definition}
\newtheorem{remark}[theorem]{Remark}
\begin{document}
%%%%%%%%%%%%%%%%%%%%%%%%%%%%%%%%%%%%%%%%%%%%%%%%%%%%%%%%%%
\title{On the local solvability of a class of degenerate second order operators with complex coefficients}
%%%%%%%%%%%%%%%%%%%%%%%%%%%%%%%%%%%%%%%%%%%%%%%%%%%%%%%%%%
%%%%%%%%%%%%%%%%%%%%%%%%%%%%%%%%%%%%%%%%%%%%%%%%%%%%%%%%%%
\author{Serena Federico}
\author{Alberto Parmeggiani}
\address{Department of Mathematics, University of Bologna,
        Piazza di Porta S.Do\-na\-to 5, 40126 Bologna, ITALY}
\email{serena.federico2@unibo.it}
\email{alberto.parmeggiani@unibo.it}
\thanks{{\bf 2010 Mathematics Subject Classification.} Primary 35A01; Secondary 35B45} 
\thanks{{\it Key words and phrases: Local solvability; a priori estimates; degenerate second order operators.} }
\date{}

\begin{abstract}
We study the local solvability of a class of operators with multiple characteristics. The class considered here
complements and extends the one studied in \cite{FP}, in that in this paper we consider some cases of  operators with complex coefficients
that were not present in \cite{FP}.
The class of operators considered here ideally encompasses classes of degenerate parabolic and Schr\"odinger 
type operators. 
We will give local solvability theorems. In general, one has $L^2$ local solvability, but also cases of local solvability with
better Sobolev regularity will be presented.
\end{abstract}
\maketitle
\pagestyle{myheadings}
%%%%%
\markboth{S. FEDERICO, A. PARMEGGIANI}{
On the local solvability of a class $\ldots$}

\section{Introduction}\label{secIntrod}
\renewcommand{\theequation}{\thesection.\arabic{equation}}
\setcounter{equation}{0}
In this paper we study the local solvability of operators $P$ defined on an open set $\Omega\subset\mathbb{R}^n$, of the form
\begin{equation}
P=\sum_{j=1}^NX_j^*fX_j+X_{N+1}+iX_0+a_0,
\label{eqP1}\end{equation}
and of the form
\begin{equation}
P=\sum_{j=1}^NX_j^*f_jX_j+X_{N+1}+a_0,
\label{eqP2}\end{equation}
where $a_0$ is a smooth complex-valued function and
\begin{itemize}
\item the $X_j=X_j(x,D)$, $0\leq j\leq N+1$, are \textit{homogeneous
first order partial differential operators} (i.e. with no lower order terms; in other words, the $iX_j$ are vector fields) 
with smooth coefficients in $\Omega$, such that the symbols of $X_{N+1}$ and $X_0$ are always \textit{real} and the symbols of
$X_1,\ldots,X_N$ are \textit{real} when $P$ is of the form (\ref{eqP1}), and \textit{complex} when $P$ is of the form (\ref{eqP2});
\item the $f_j\in C^\infty(\Omega;\mathbb{R})$ for $1\leq j\leq N$, and 
\item $f\colon\Omega\longrightarrow\mathbb{R}$ is a smooth function with $S:=f^{-1}(0)\not=\emptyset$ and $df\bigl|_S\not=0$.
\end{itemize}

The operators of the form (\ref{eqP1}) will be called of \textit{mixed-type} (because of the presence of the complex coefficients operator $X_{N+1}+iX_0$, with $X_0\not=0$)
%, ''mixed" referring to purely ''parabolic" cases when $X_{N+1}=0$ and purely "Schr\"odinger" cases when $X_0=0$)
and those of the form (\ref{eqP2}) will be called of \textit{Schr\"odinger-type} (because of the presence of the real coefficients
operator $X_{N+1}$ only, $X_0$ being identically zero).

The class of operators of the form (\ref{eqP1}) and (\ref{eqP2}) enlarges and complements that studied in \cite{FP} 
(in turn, a generalization of the class introduced by Colombini, Cordaro and Pernazza in \cite{CCP}) of operators of the form
$$P=\sum_{j=1}^NX_j^*fX_j+iX_0+a_0,$$
which has as an important ancestor the Kannai operator (and the class considered by
Beals and Fefferman in \cite{BF}). In fact, as already explained earlier, here we allow cases
in which the $X_0,X_1,\ldots,X_N, X_{N+1}$ have a \textit{real} symbol but with $X_0\not=0$ (the \textit{mixed-type} case of Section \ref{sec2}),
and cases  in which the $X_1,\ldots,X_N$ are allowed to have a \textit{complex} symbol but with $X_{N+1}\not=0$ and
$X_0=0$ (the \textit{Schr\"odinger-type} case of Section \ref{sec3}).

Note that in \cite{FP} we did allow a complex case in which the $X_1,\ldots,X_N$ were complex but there we had $X_0\not=0$
and $X_{N+1}=0$. 

Our main motivation in studying such a class of degenerate differential operators is to push the frontier for the solvability in presence of
multiple characteristics. 
Besides the papers \cite{BF}, \cite{CCP}, \cite{FP} and \cite{F} (in which a case with non-smooth coefficients is studied), and the book \cite{L} (where one
can find an updated account of the solvability issue under the ($\Psi$) condition of Nirenberg and Treves, problem solved by Dencker in \cite{D}),
we wish to recall a number of works related to the local solvability of operators with multiple characteristics, 
such as \cite{Pop}, \cite{MU, MU1}, \cite{MR}, \cite{T, T2}, \cite{PeR}, \cite{M}, \cite{K}, \cite{PP}, and \cite{D1, D2} (see also \cite{P} 
and references therein). In particular, among them we wish to single out the recent paper \cite{D2} by Dencker in which he introduces the class of sub-principal type 
operators (whose characteristics are involutive) for which he gave necessary conditions for local solvability, and the paper \cite{PP} by Parenti and Parmeggiani 
(see also \cite{P}) in which they obtain semiglobal solvability results (with a loss of many derivatives) for operators with transversal multiple symplectic characteristics. .  
%In this context, recently Dencker \cite{D2} introduced the class of sub-principal type operators (whose characteristics 
%are involutive) for which he gave necessary conditions for local solvability, and Parenti and Parmeggiani \cite{PP} (see also \cite{P}) 
%obtained semiglobal solvability results (with a loss of many derivatives) for operators with transversal multiple symplectic characteristics.  
In the case of the class of operators we consider in this paper, we aim to give sufficient conditions for local solvability in presence of an 
interplay of different kinds of degeneracies, namely that coming from the change of sign of $f$, or $f_j$, in (\ref{eqP1}), and (\ref{eqP2}), 
and that coming from the system of vector fields $(iX_0,\ldots,iX_N)$.
This class is all the more interesting in that it contains operators whose adjoint is \textit{not} hypoelliptic. 

In \cite{FP} we used a "positive commutator method" that, starting from estimating $|\!|P^*u|\!|_0^2$, 
could make use of fundamental lower-bound estimates 
(the G\aa rding, the Melin, the Fefferman-Phong, and the Rothschild-Stein subelliptic estimates for H\"ormander's sums of squares).
In the present case, such a method cannot be used (as one can easily see, for instance, from the Schr\"odinger operator 
$P=D_t+A$, since when estimating $|\!|P^*u|\!|_0^2=|\!|A^*u|\!|_0^2
+|\!|D_tu|\!|_0^2+2\,\mathsf{Re}\,(A^*u,D_tu)$ one is not able to directly extract any extra information coming from the term 
$2\,\mathsf{Re}\,(A^*u,D_tu)$ as one could in \cite{FP}). We will have to make a Carleman estimate straight from the beginning.
In the \textit{mixed-type} case (i.e. $P$ of the form (\ref{eqP1})),
we shall however be once more in a position to exploit the above lower-bound estimates to go, in some cases, beyond the \textit{$L^2$ to $L^2$ local
solvability}, and get a better \textit{$H^{-s}$ to $L^2$ local solvability} (see Definition \ref{terminology} below), with $s=-1/2$ or $s=-1$, or $s=-1/r$ ($r\geq 3$).
In the \textit{Schr\"odinger-type} case, we will not be able to exploit the above lower-bound estimates and 
the Carleman estimate will grant $L^2$ local solvability results under the assumption
that the system of complex operators $X_1,\ldots,X_N$ admits, locally near each $x_0\in\Omega$, a \textit{real} smooth first integral $g$ (i.e., such that
$dg(X_j)=dg(\mathsf{Re}\,X_j)+idg(\mathsf{Im}\,X_j)=0$) near $x_0$, $1\leq j\leq N$, such
that $X_0g\not=0$ near $x_0$.  

Recall the following the terminology introduced in \cite{FP}.
\begin{definition}\label{terminology}
Given $s,s'\in\mathbb{R}$ we say that we have
{\rm $H^s$ to $H^{s'}$ local solvability} if for any given $x_0\in\Omega$ there is a compact $K\subset\Omega$ with 
$x_0\in\mathring{K}$ (the interior of $K$) such that for all $v\in H^s_{\mathrm{loc}}(\Omega)$ there exists 
$u\in H^{s'}_{\mathrm{loc}}(\Omega)$ with $Pu=v$ in $\mathring{K}$. We will call the number $s'-s$ the {\rm gain
of smoothness} of the solution.
\end{definition}

\begin{remark}
It is important to remark once more that the class we consider here, as well as that considered in \cite{FP}, contains operators that are
{\rm not} adjoints of hypoelliptic operators (see \cite{P}, Example 3.7).
\end{remark}

%Besides the papers \cite{BF}, \cite{CCP}, \cite{FP} and \cite{F} (in which a case with non-smooth coefficients is studied), and the book \cite{L} (where one
%can find an updated account of the solvability issue under the ($\Psi$) condition of Nirenberg and Treves, problem solved by Dencker in \cite{D}),
%we wish to recall a number of works related to the local solvability of operators with degeneracies, 
%such as \cite{Pop}, \cite{MU, MU1}, \cite{MR}, \cite{T, T2}, \cite{PeR}, \cite{M}, \cite{K}, \cite{PP}, and \cite{D1, D2} (see also \cite{P} and references therein).  

We next establish some notation that will be used throughout the paper.

In general, for a differential operator with complex coefficients of the form
$X(x,D)=\langle\zeta(x),D\rangle,$ where $D=(D_1,\ldots,D_n)$, $D_k=-i\partial_k$, and
$\zeta\in C^\infty(\Omega;\mathbb{C}^n),$ we have
\begin{equation}
X(x,D)^*=\bar{X}(x,D)+d_{\bar{X}}(x),
\label{eqAdj}\end{equation} 
where 
$$\bar{X}(x,D)=\langle\overline{\zeta(x)},D\rangle,\quad\text{\rm and}\quad d_{\bar{X}}(x)=\sum_{k=1}^nD_k\overline{\zeta_k(x)}=-\overline{d_X(x)}.$$
Therefore, in general for the formal adjoints of the $X_j(x,D)$ we have that $X_j(x,D)^*=\bar{X}_j(x,D)+d_{\bar{X}_j}(x)$ 
and, since $X_0(x,\xi)$ and $X_{N+1}(x,\xi)$ are real,
$$d_{X_0}(x)=d_{\bar{X}_0}(x)=-\overline{d_{X_0}(x)},\quad d_{X_{N+1}}(x)=d_{\bar{X}_{N+1}}(x)=-\overline{d_{X_{N+1}}(x)},$$
so that, in particular, $d_{X_0},$ $d_{X_{N+1}}$ are \textit{purely imaginary}. 

In the case of $P$ of the form (\ref{eqP1}), we put 
$$\Sigma_j:=\{(x,\xi)\in T^*\Omega\setminus 0;\,\,X_j(x,\xi)=0\},\quad 0\leq j\leq N,$$
\begin{equation}
\Sigma:=\bigcap_{j=0}^N\Sigma_j\subset T^*\Omega\setminus 0,
\label{eqCharX}\end{equation}
and call $\Sigma$ the \textit{characteristic set of the system $(X_0,X_1,\ldots,X_N)$}. The kind of degeneracy of an
operator $P$ of the form (\ref{eqP1}) therefore comes from the interplay of the location of $\pi^{-1}(S)$ with respect to $\Sigma$ (here $\pi\colon T^*\Omega\longrightarrow\Omega$
denotes the canonical projection), that is, from the zero-set of $f$ and the behavior of the family of 
operators $X_j$, $0\leq j\leq N$ near it.

Notice that the set $\Sigma$ will play a role only in the case of mixed-type operators (\ref{eqP1}), and \textit{not} 
in the Schr\"odinger-type case (\ref{eqP2}).

We conclude this introduction by giving the plan of the paper. 
In Section \ref{sec2} we will consider the mixed-type case in which the $X_0, X_1,\ldots,X_N, X_{N+1}$ have a real symbol and $X_0\not=0$,
and show in Theorem \ref{thmMixedType}, under suitable assumptions on the commutators of $X_0$ with the $X_j$, $1\leq j\leq N+1$, and 
assuming control
of the symbol of $\mathsf{Im}\,d_{X_0}\,X_{N+1}$ by $(\sum_{j=0}^NX_j(x,\xi)^2))^{1/2}$, that one has local solvability near $S$ with a better gain
of smoothness.
In Section \ref{secExMType} we shall give  examples of operators of mixed-type (\ref{eqP1}) to which Theorem \ref{thmMixedType}
can be applied, thus showing the different issues of local solvability with different smoothness.
In Section \ref{sec3} we will consider the Schr\"odinger-type
case $X_0=0$ with $X_1,\ldots,X_N$ having a complex symbol and show in Theorem \ref{thmComplex} that one has $L^2$ to $L^2$ local solvability near any given point of $\Omega$.
In the final Section \ref{secExSType} we will give examples of operators of Schr\"odinger type (\ref{eqP2}) to which Theorem \ref{thmComplex} can be applied. 

\section{The mixed-type case}\label{sec2}

We now turn our attention to an operator $P$ of the form (\ref{eqP1}) (mixed-type case), that is
$$P=\sum_{j=1}^NX_j^*fX_j+X_{N+1}+iX_0+a_0,$$
where the symbols of $X_j,$ $0\leq j\leq N+1$ are all \textit{real} on the open
set $\Omega\subset\mathbb{R}^n$, and where $f\in C^\infty(\Omega;\mathbb{R})$ is such that $S:=f^{-1}(0)$ is non-empty and 
$df\bigl|_S\not=0.$ Recall that, writing
$X_j(x,\xi)=\langle\alpha_j(x),\xi\rangle$ for $\alpha_j\in C^\infty(\Omega;\mathbb{R}^n),$ then $d_{X_j}=\sum_{k=1}^nD_k\alpha_{jk}\in C^\infty(\Omega;i\mathbb{R})$.

Note that in this case the subprincipal symbol of $P$ is given by
$$\mathrm{sub}(P)(x,\xi)=X_{N+1}(x,\xi)+iX_0(x,\xi).$$

In order to prove the a priori inequality that ensures the local solvability result we are interested in, one has to control from below in $L^2$
a quadratic form of the kind $(\widehat{P}_{\gamma,\varepsilon}u,u)$, $u\in C_0^\infty$, where, 
for $\gamma>0$ and $\varepsilon\in(0,1]$ suitably fixed constants,
\begin{equation}
\widehat{P}_{\gamma,\varepsilon}=\widehat{P}_{\gamma,\varepsilon}(x,D):=\sum_{j=0}^N\Bigr(X_j^*X_j
-\varepsilon[X_j,X_0]^*[X_j,X_0]\Bigr)+\frac{1}{\gamma}Y,
\label{eqPgamma}\end{equation}
with $Y$ given by
\begin{equation}
Y:=-\frac12\Bigl((\mathsf{Im}\,d_{X_0})X_{N+1}+((\mathsf{Im}\,d_{X_0})X_{N+1})^*\Bigr)=Y^*.
\label{eqY}\end{equation} 
%and, for $\gamma>0$ and $\varepsilon\in(0,1]$ fixed constants,
%\begin{equation}
%\widehat{P}_{\gamma,\varepsilon}=\widehat{P}_{\gamma,\varepsilon}(x,D):=\sum_{j=0}^N\Bigr(X_j^*X_j
%-\varepsilon[X_j,X_0]^*[X_j,X_0]\Bigr)+\frac{1}{\gamma}Y.
%\label{eqPgamma}\end{equation}
The point is then to give conditions on the system of real vector fields $iX_0,\ldots,iX_{N+1}$ in relation with $S$ 
in order that $\widehat{P}_{\gamma,\varepsilon}$ satisfy the Fefferman-Phong inequality (with $\gamma$ and $\varepsilon$ suitably
chosen). 

In this section we make the following hypotheses:

\begin{itemize}
\item[(HM1)] $iX_0f>0$ on $S$;
\item[(HM2)] For all $x_0\in S$ there exists a compact $K\subset\Omega$, containing $x_0$ in its interior, and a constant $C_K>0$ such that for all $j=1,\ldots,N+1$
$$\{X_j,X_0\}(x,\xi)^2\leq C_K\sum_{j'=0}^NX_{j'}(x,\xi)^2,\,\,\forall (x,\xi)\in  K\times\mathbb{R}^n;$$
\item[(HM3)] For all $x_0\in S$ there exists a compact $K\subset\Omega$, containing $x_0$ in its interior, and a constant $C_K>0$ such that
$$|(\mathsf{Im}\,d_{X_0}(x))X_{N+1}(x,\xi)|\leq C_K\Bigl(\sum_{j=0}^NX_j(x,\xi)^2\Bigr)^{1/2},\,\,\,\,\forall (x,\xi)\in K\times\mathbb{R}^n.$$
\end{itemize}

\begin{definition}[Hypothesis (HM4)]\label{defHM4}
We shall say that hypothesis (HM4) is satisfied at $x_0\in S$ if $\pi^{-1}(x_0)\cap\Sigma\not=\emptyset$ and
$$\mathrm{Tr}^+F(\rho)>0,\,\,\,\,\forall\rho\in\pi^{-1}(x_0)\cap\Sigma,$$
where $\mathrm{Tr}^+F(\rho)$ is the positive trace of the Hamilton map of the principal symbol of $\sum_{j=0}^NX_j^*X_j$ (see \cite{Ho}). 
\end{definition}

\begin{definition}[Hypothesis (HM5)]\label{defHM5}
Let $\mathscr{L}_k(x)$ be the (real) vector space
generated by the vector fields $iX_0,\ldots,iX_N$ 
along with their commutators of length at most $k$ evaluated at the point $x$.
\footnote{We take this opportunity to correct the statements in \cite{FP} (Thm. 9.2) and \cite{P} (Thm. 3.12) in which the same condition (HM5) appears. In both papers, 
$\mathscr{L}_r(x)$ was meant to be defined as in the definition, and it suffices that the maximality condition on the dimension 
be holding at $x_0$ only.}
We shall say that hypothesis (HM5) is satisfied at $x_0\in S$ if $\pi^{-1}(x_0)\cap\Sigma\not=\emptyset$ and
one has the existence of an integer $r\geq 1$ such that 
$$\dim\mathscr{L}_r(x_0)=n.$$
\end{definition}

In the following remarks we explain the connection of hypotheses (HM4) and (HM5) to the Melin and the Rothschild-Stein
lower-bound estimates. Recall that $\Sigma$ is the characteristic set of the operator $\sum_{j=0}^NX_j^*X_j$.

\begin{remark}\label{remHM4}
Condition (HM4) is equivalent to condition (H3) of \cite{FP}. In fact, let $\rho\in\Sigma$ and let $H_{X_j}(\rho)$ be the Hamilton vector fields of the symbols
$X_j(x,\xi)$ at $\rho.$ Define $V(\rho)=\mathrm{Span}\{H_{X_j}(\rho);\,\,j=0,\ldots N\}$ and let $J=J(\rho)\subset\{0,\ldots,N\}$ be a set of indices for which 
$H_{X_j}(\rho)$, $j\in J$, form a basis of $V(\rho).$ If $r=\sharp J$ and if one considers the $r\times r$ matrix
$$M(\rho)=[\{X_j,X_{j'}\}(\rho)]_{j,j'\in J},$$  
then (HM4) is equivalent to requiring
$$\mathrm{rank}\,M(\rho)\geq 2,\,\,\,\,\forall\rho\in\pi^{-1}(x_0)\cap\Sigma.$$
Note also that if condition (HM4) holds at $x_0$ then there exists a sufficiently small open neighborhood $V_{x_0}$ of $x_0$ such that it holds for all $\rho\in
\pi^{-1}(V_{x_0})\cap\Sigma.$

Finally, since the subprincipal symbol of $\sum_{j=0}^NX_j^*X_j$ is identically zero (the symbols $X_j(x,\xi)$ being real)
we have that condition (HM4) is Melin's strong Tr+ condition
$$\mathrm{sub}(\sum_{j=0}^NX_j^*X_j)(\rho)+\mathrm{Tr}^+F(\rho)>0,\,\,\,\,\forall\rho\in\pi^{-1}(x_0)\cap\Sigma,$$
whence (HM4) yields that for a sufficiently small compact $K$ containing $x_0$ in its interior we have the sharp Melin inequality \cite{Ho}
\begin{equation}
(\sum_{j=0}^NX_j^*X_j u,u)=\sum_{j=0}^N|\!|X_ju|\!|_0^2\geq c_K|\!|u|\!|_{1/2}^2-C_K|\!|u|\!|_0^2,\,\,\,\,\forall u\in C_0^\infty(K),
\label{eqMelin}\end{equation}
for $c_K,C_K$ positive constants. 
\end{remark}

\begin{remark}\label{remHM5}
Condition (HM5) yields the Rothschild-Stein sharp subelliptic estimate in a neighborhood $V_{x_0}$ of $x_0$
(see \cite{RS}, and \cite{HN}): 
For any given compact $K\subset V_{x_0}$ there exists $C_K>0$ such that
\begin{equation}
|\!|u|\!|_{1/r}^2\leq C_K\Bigl(\sum_{j=0}^N|\!|X_ju|\!|_0^2+|\!|u|\!|_0^2\Bigr),\,\,\,\,\forall u\in C_0^\infty(K).
\label{eqHRS}\end{equation}
Note that condition (HM4) (via the sharp Melin inequality), yields (\ref{eqHRS}) with $r=2$. Moreover, 
hypothesis (HM4) is symplectically invariant, 
and the sharp Melin inequality holds true for general pseudodifferential operators. 
Note also that for the full microlocal analogue of (\ref{eqHRS}) one needs the full strength of the maximal hypoelliptic 
estimates of \cite{HN} (see also \cite{BCN}).   
\end{remark}

In this section we will show that under hypotheses (HM1) through (HM3) the operator $P$ of the form (\ref{eqP1}) 
is $L^2$ to $L^2$ locally solvable near any given $x_0\in S$ such that $\pi^{-1}(x_0)\cap\Sigma\not=\emptyset$. 
When in addition hypothesis (HM4) holds then $P$ is $H^{-1/2}$ to $L^2$ locally solvable near such an $x_0$, when (HM4) is
replaced by (HM5) then $P$ is $H^{-1/r}$ to $L^2$ locally solvable near such an $x_0$, and finally when
$x_0$ is such that $\pi^{-1}(x_0)\cap\Sigma=\emptyset$ then $P$ is $H^{-1}$ to $L^2$ locally solvable near such an $x_0$. 
This result generalizes the result of \cite{FP} in that, there, only the case $X_{N+1}=0$ was considered. 
As in \cite{FP}, the point here is to obtain an a priori estimate that makes use of the Fefferman-Phong almost-positivity estimates for
the auxiliary operator $\widehat{P}_{\gamma,\varepsilon}$ and the G\aa rding, or the sharp Melin inequality, or the Rothschild-Stein 
subelliptic estimate, depending on the cases, for the operator $\sum_{j=0}^NX_j^*X_j$.
However, the approach of \cite{FP} cannot be directly used in the present case.

We will prove the following theorem.

\begin{theorem}\label{thmMixedType}
Let $P$ be an operator of the form \eqref{eqP1} defined on an open set $\Omega\subseteq\mathbb{R}^n$. If $P$ satisfies hypotheses (HM1), (HM2) and (HM3), then
\begin{itemize}
\item [(i)] for all $x_0\in S$ 
%is such that $\Sigma\cap\pi^{-1}(x_0)\neq\emptyset$ then 
one has that $P$ is $L^2$ to $L^2$ locally solvable at $x_0$;
\item [(ii)] if $x_0\in S$ is such that $\Sigma\cap\pi^{-1}(x_0)\neq\emptyset$  and condition (HM4) is satisfied at $x_0$ then $P$ is $H^{-1/2}$ to $L^2$ locally solvable at $x_0$;
\item[(iii)] if $x_0\in S$ is such that $\Sigma\cap\pi^{-1}(x_0)\neq\emptyset$  and condition (HM5) is satisfied at $x_0$ then $P$ is $H^{-1/r}$ to $L^2$ locally solvable at $x_0$;
\item [(iv)] if $x_0\in S$ is such that $\Sigma\cap\pi^{-1}(x_0)=\emptyset$ then $P$ is $H^{-1}$ to $L^2$ locally solvable at $x_0$.
\end{itemize}
\end{theorem}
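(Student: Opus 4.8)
The plan is to derive the solvability statements from a family of Carleman-type \emph{a priori estimates} for the adjoint operator $P^*$, one estimate for each of the four regularity levels $s=-1,-1/2,-1/r,-1$. Recall that $H^s$ to $L^2$ local solvability at $x_0$ follows (by the usual Hahn--Banach/functional-analytic argument) once one establishes, for a suitable compact neighborhood $K$ of $x_0$, an inequality of the form $\|v\|_{-s}\le C\|P^*v\|_0$ for all $v\in C_0^\infty(K)$ (with $-s\in\{1,1/2,1/r,1\}$ in the four cases). So the whole theorem reduces to proving such estimates. Since, as the authors emphasize in the introduction, the ``positive commutator'' route starting from $\|P^*u\|_0^2$ fails here (the Schr\"odinger-type cross term cannot be exploited), I would instead conjugate by a weight: fix a defining function for $S$ (or simply use $f$, or $-\log|f|$-type weights adapted to (HM1)), set $P_\tau^*=e^{\tau\varphi}P^*e^{-\tau\varphi}$, and look for a lower bound $\|P_\tau^*w\|_0^2\gtrsim \tau\,(\text{stuff})$ with the right powers of $\tau$; undoing the conjugation then yields the desired estimate on a small enough $K$ (small in order to absorb error terms and use (HM1), which gives the sign $iX_0f>0$ that makes the Carleman method run).

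The key computational step is to expand $\|P_\tau^*w\|_0^2$ and collect the terms that are linear in $\tau$ (these come from the commutator of the second-order part with the weight, and crucially from the interaction of $iX_0$ with the weight, using $iX_0f>0$ on $S$). The claim I would aim for is that, after this expansion and after moving the harmless terms to the right, the quadratic form that must be shown to be almost-positive is exactly (a multiple of) $(\widehat P_{\gamma,\varepsilon}w,w)$ with $\widehat P_{\gamma,\varepsilon}$ as in \eqref{eqPgamma} and $Y$ as in \eqref{eqY}: the $-\varepsilon\sum_j[X_j,X_0]^*[X_j,X_0]$ terms are produced by Cauchy--Schwarz applied to cross terms $2\,\mathsf{Re}(X_jw,[X_j,X_0]w)$ and are controlled using (HM2); the term $\tfrac1\gamma Y$ collects the contribution of $(\mathsf{Im}\,d_{X_0})X_{N+1}$ and is controlled using (HM3); and $a_0$, $X_{N+1}$, and the remaining lower-order and commutator terms are absorbed for $\gamma$ large and $K$ small. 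Then the Fefferman--Phong inequality applied to $\widehat P_{\gamma,\varepsilon}$ (which is where (HM2), (HM3), and (HM1) are used to guarantee the needed lower bound with the right $\gamma,\varepsilon$) gives the basic estimate $\|v\|_0\le C\|P^*v\|_0$ after inverting the conjugation, hence part (i).

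For parts (ii)--(iv) the same scheme is run, but now one has an \emph{extra} nonnegative contribution $\sum_{j=0}^N\|X_jw\|_0^2=(\sum_j X_j^*X_j\,w,w)$ available on the right-hand side of the Carleman inequality (it is not thrown away as in (i)), and one feeds it into the appropriate sharp lower-bound estimate for $\sum_{j=0}^NX_j^*X_j$: the sharp G\aa rding inequality in the generic situation, the sharp Melin inequality \eqref{eqMelin} under (HM4), and the Rothschild--Stein subelliptic estimate \eqref{eqHRS} under (HM5); when $\pi^{-1}(x_0)\cap\Sigma=\emptyset$ the operator $\sum_j X_j^*X_j$ is elliptic near $x_0$, giving the full $\|w\|_1^2$ and hence the $H^{-1}$ statement (iv). Gaining $1/2$, $1/r$, or $1$ derivative in the Carleman estimate translates into the stated $H^{-1/2}$, $H^{-1/r}$, $H^{-1}$ to $L^2$ solvability. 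The main obstacle I expect is the bookkeeping in the first step: choosing the weight $\varphi$ so that the $\tau$-linear terms genuinely reproduce $\widehat P_{\gamma,\varepsilon}$ up to terms that (HM2)--(HM3) and a small $K$ can absorb, and in particular handling the first-order operator $X_{N+1}$ (absent in \cite{FP}) — its commutator with the weight and its interaction with $\mathsf{Im}\,d_{X_0}$ are precisely what forces the introduction of $Y$ in \eqref{eqY} and hypothesis (HM3), and getting the signs and the powers of $\tau$ to line up there is the delicate point.
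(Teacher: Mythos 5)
Your high-level scaffolding is the right one, and it does match the paper: exploit (HM1) to get a sign, use (HM2)--(HM3) to place the auxiliary operator $\widehat P_{\gamma,\varepsilon}$ in the scope of the Fefferman--Phong inequality, and then upgrade $L^2$ to $H^{-1/2}$, $H^{-1/r}$ or $H^{-1}$ solvability via the sharp Melin inequality under (HM4), the Rothschild--Stein subelliptic estimate under (HM5), or ellipticity of $\sum_{j=0}^N X_j^*X_j$ when $\pi^{-1}(x_0)\cap\Sigma=\emptyset$. But the concrete mechanism you propose for the first (and crucial) step is genuinely different from the paper's, and as written it has a gap.

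The paper does not conjugate by an exponential weight $e^{\tau\varphi}$ and expand $\|P_\tau^*w\|_0^2$. Instead it proves (Proposition~\ref{propKey}) the \emph{linear-in-$P^*$} estimate
\[
2\,\mathsf{Re}(P^*u,-iX_0u)\ \geq\ c_K\sum_{j=0}^N\|X_ju\|_0^2+\tfrac32\|X_0u\|_0^2-C_K\|u\|_0^2,
\]
obtained by pairing $P^*u$ against the first-order operator $-iX_0u$ and integrating by parts. The sign (HM1) enters because the commutator of $f$ with $-iX_0$ in $(X_j^*fX_ju,-iX_0u)$ produces $((iX_0f)X_ju,X_ju)\geq c_0\|X_ju\|_0^2$, while the terms $-\varepsilon[X_j,X_0]^*[X_j,X_0]$ and $\tfrac1\gamma Y$ in \eqref{eqPgamma} arise from Cauchy--Schwarz on the remaining commutator terms; this is exactly what Lemma~\ref{lemmaFP-M-G} then controls via Fefferman--Phong. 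The passage to $\|P^*u\|_0^2\geq\cdots$ is afterwards trivial, since $2\,\mathsf{Re}(P^*u,-iX_0u)\leq\|P^*u\|_0^2+\|X_0u\|_0^2$ and the extra $\tfrac32\|X_0u\|_0^2$ on the left absorbs $\|X_0u\|_0^2$, leaving $\tfrac12\|X_0u\|_0^2$ which, combined with Poincar\'e for the nonsingular $X_0$ near $S$, kills $-C_K\|u\|_0^2$. Being linear in $P^*$, this pairing sidesteps entirely the obstruction you correctly recall from the introduction (the useless Schr\"odinger cross-term in $\|P^*u\|_0^2$).

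Your exponential-weight route is the one the paper actually uses later, for the Schr\"odinger-type case (Theorem~\ref{thmComplex}, with $B=e^{\lambda g}$), where a real first integral $g$ of the $X_j$ is available. For the mixed-type case you never pin down the weight $\varphi$ (and the suggestion $-\log|f|$ is problematic since $f$ changes sign on $S$, so $e^{\tau\varphi}$ blows up there), and the central claim --- that the $\tau$-linear terms of $\|P_\tau^*w\|_0^2$ reproduce $(\widehat P_{\gamma,\varepsilon}w,w)$ --- is asserted, not verified. I do not see why it should hold: conjugating the second-order part $\sum_jX_j^*fX_j$ by $e^{\tau\varphi}$ generates $\tau$-linear first-order contributions of the form $(X_j\varphi)fX_j+X_j^*f(X_j\varphi)$, and it is not clear that these, together with the contributions of $X_{N+1}$ and $iX_0$, assemble into the structure \eqref{eqPgamma}--\eqref{eqY} rather than into uncontrolled terms. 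You also acknowledge this bookkeeping uncertainty yourself, which confirms the gap. One further small slip: for case (i) the a priori estimate needed is $\|v\|_0\leq C\|P^*v\|_0$ (i.e.\ $s=0$, not $s=-1$); your list $s=-1,-1/2,-1/r,-1$ should read $s=0,-1/2,-1/r,-1$.
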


We prepare the proof of Theorem \ref{thmMixedType} by establishing the following key estimate.

\begin{proposition}\label{propKey}
There exists a compact $K\subset\Omega$ containing $x_0$ in its interior and with sufficiently small diameter, and constants $c_K, C_K>0$  
such that for all $u\in C_0^\infty(K)$
\begin{equation}
2\,\mathsf{Re}(P^*u,-iX_0u)\geq c_K\sum_{j=0}^N|\!|X_ju|\!|_0^2
+\frac32|\!|X_0u|\!|_0^2-C_K|\!|u|\!|_0^2.
\label{eqFundIneq}\end{equation}
\end{proposition}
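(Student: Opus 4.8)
The plan is to compute the commutator $2\,\mathsf{Re}(P^*u,-iX_0u) = (P^*u,-iX_0u) + \overline{(P^*u,-iX_0u)} = (u, P(-iX_0)u) + (-iX_0 u, P^* u)$ and reorganize it as $(u,[P,-iX_0]u)$ plus boundary-type terms, then extract the nonnegative contributions. Since $-iX_0$ is (formally) skew-adjoint up to the purely imaginary zeroth-order term $d_{X_0}$, write $-iX_0 = (-iX_0)^* + (\text{mult. by } i\,\mathsf{Im}\,d_{X_0})$ — actually $(-iX_0)^* = -i\bar X_0 - i d_{\bar X_0} = -iX_0 - i d_{X_0}$, and $d_{X_0} \in i\mathbb{R}$, so $(-iX_0)^* = -iX_0 + \mathsf{Im}\,d_{X_0}$. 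Thus $2\,\mathsf{Re}(P^*u,-iX_0u)$ unfolds into $([-iX_0,P]u,u)$ plus a term involving $\mathsf{Im}\,d_{X_0}$ acting against $P$; I would track these carefully. The leading principal-part piece is the commutator of $-iX_0$ with the second-order operator $\sum_j X_j^* f X_j$; there the factor $f$ and its derivative $iX_0 f$ enter, and hypothesis (HM1), $iX_0 f > 0$ on $S$, is what produces the positive term $\sum_{j=1}^N \|X_j u\|_0^2$ (times $f$-weighted and $(iX_0 f)$-weighted quantities, which are positive on a small enough $K$ around $x_0\in S$ — here one uses that the bad set where $f$ changes sign is exactly where $iX_0 f$ is positive, so the two degeneracies compensate). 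The commutator of $-iX_0$ with $X_{N+1}+iX_0+a_0$ contributes $[X_0,X_{N+1}]$ and lower-order junk, plus the $\frac32\|X_0u\|_0^2$ which should come from pairing $-iX_0$ against the $iX_0$ in $P$ together with the $\mathsf{Im}\,d_{X_0}$-term.

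Concretely I would proceed in these steps. First, expand $2\,\mathsf{Re}(P^*u,-iX_0u)$ term by term over the pieces of $P$: the sum-of-squares part $\sum_{j=1}^N X_j^* f X_j$, the part $X_{N+1}$, the part $iX_0$, and the part $a_0$. Second, for the sum-of-squares part, integrate by parts to move one $X_j$ and the $-iX_0$ around so as to create $\sum_j (f [X_j,-iX_0]^*[X_j,-iX_0]\text{-type})$ and a main term $\sum_j ((iX_0 f) X_j u, X_j u) = \sum_j (iX_0 f)\|X_j u\|_0^2$ up to commutators; since $iX_0 f > 0$ on $S = f^{-1}(0)$ and $df|_S \neq 0$, on a sufficiently small $K$ one has $iX_0 f \geq c_K > 0$ there, giving $c_K\sum_{j=1}^N\|X_ju\|_0^2$ modulo errors absorbed by $C_K\|u\|_0^2$ and by the other positive terms. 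Third, the cross terms from $[X_j,X_0]$ are estimated using (HM2) and (HM3): these feed exactly into the structure of the auxiliary operator $\widehat P_{\gamma,\varepsilon}$ in \eqref{eqPgamma}–\eqref{eqY}, and one shows that the negative contributions $-\varepsilon\|[X_j,X_0]u\|_0^2$ and the $Y/\gamma$ term are controlled — with (HM3) providing the bound on $(\mathsf{Im}\,d_{X_0})X_{N+1}$ that makes $Y$ manageable, and (HM2) providing the $\{X_j,X_0\}^2 \lesssim \sum_{j'} X_{j'}^2$ bound which, via the Fefferman–Phong inequality applied to $\widehat P_{\gamma,\varepsilon}$, converts into an almost-nonnegative quadratic form. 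Fourth, collect the $\|X_0u\|_0^2$ contributions: pairing $-iX_0$ against $iX_0$ in $P$ gives $\|X_0u\|_0^2$, and the $\mathsf{Im}\,d_{X_0}$-correction plus the commutator $[X_0,X_{N+1}]$ (controlled by (HM2) with $j=N+1$) and the $a_0$ term should be organized so that the net coefficient is $\geq 3/2$ after choosing $\gamma$ large and $\varepsilon$ small; the slack between the honest $\|X_0u\|_0^2$ and the claimed $\frac32\|X_0u\|_0^2$ is exactly what is available to absorb the sign-indefinite leftovers. Finally, all remaining terms are zeroth or first order with smooth bounded coefficients on the compact $K$ and are dominated by $\delta(\sum_{j=0}^N\|X_ju\|_0^2) + C_{K,\delta}\|u\|_0^2$ for small $\delta$, which is reabsorbed into the left-hand positive terms.

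The main obstacle I anticipate is the bookkeeping of the first-order error terms generated by the repeated integrations by parts against the variable coefficients $f$, $a_0$, the $\alpha_j$'s and $d_{X_j}$'s — in particular making sure that every term of the form $(\text{smooth})\cdot X_j u$ paired against $u$ or against another $X_{j'}u$ is genuinely absorbable, and that the interaction between the $f$-degeneracy and the $X_0$-degeneracy does not leak a term that is not controlled by $\sum_{j=0}^N\|X_ju\|_0^2$. The delicate point is that $f$ vanishes precisely on $S$, so one cannot divide by it; instead one must keep $f$ attached to $\sum_{j\geq 1}X_j u$ throughout and only use its derivative $iX_0 f$, which is $> 0$ there — this is the structural heart of the estimate and mirrors, but is not identical to, the argument in \cite{FP}. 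Choosing the compact $K$ small enough (so that $iX_0 f$ stays positive and all the $C^\infty$ coefficients are nearly constant) and then fixing $\gamma$ and $\varepsilon$ in the right order relative to the Fefferman–Phong constant for $\widehat P_{\gamma,\varepsilon}$ is what closes the argument.
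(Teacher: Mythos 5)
Your proposal follows essentially the same route as the paper's proof: expand $2\,\mathsf{Re}(P^*u,-iX_0u)$ over the pieces of $P$, integrate by parts in the sum-of-squares term so that the main contribution is $(iX_0 f)\sum_j\|X_ju\|_0^2$ (positive by (HM1) on a small compact around $x_0\in S$), recognize the commutator and $(\mathsf{Im}\,d_{X_0})X_{N+1}$ remainders as precisely the content of $\widehat P_{\gamma,\varepsilon}$, and invoke Fefferman--Phong via (HM2)--(HM3). Two small imprecisions in your bookkeeping are worth flagging, although neither undermines the argument. First, the pairing of $-iX_0^*u$ against $-iX_0u$ actually produces $2\|X_0u\|_0^2$, not $\|X_0u\|_0^2$; it is this full factor $2$ that supplies the slack which, after the various $\delta$-absorptions, leaves the stated $\tfrac32\|X_0u\|_0^2$ --- your phrasing has the direction of the slack reversed. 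Second, you describe ``choosing $\gamma$ large and $\varepsilon$ small'' as free parameter choices, whereas in fact $\gamma_0$ is a \emph{small} fixed constant determined by the positivity constant $c_0$ in (HM1), and $\varepsilon$ is not a free parameter at all: it equals $\|f\|_{L^\infty(K)}/\gamma_0$, and it becomes small only because $f$ vanishes on $S$ and $K$ is shrunk around $x_0\in S$ --- this is the key mechanism that makes the Fefferman--Phong lemma applicable to $\widehat P_{\gamma_0,\varepsilon(K)}$, and it is what justifies your correct intuition that ``the two degeneracies compensate.''
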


\begin{proof}[Proof of Proposition \ref{propKey}]
Let for short $B=-X_0.$
Fix $x_0\in S$ and consider a compact $K\subset\Omega$ containing $x_0$ in its interior. Write
\begin{equation}
\label{GP1}
2\mathsf{Re}(P^*u,iB u)=\sum_{j=1}^N\underset{(\ref{GP1}.1)}{\underbrace{2\mathsf{Re}(X_j^*f X_ju,iB u)}}+\underset{(\ref{GP1}.2)}{\underbrace{2\mathsf{Re}\big((X^*_{N+1}-iX^*_0)u,iB u\big)}}.
\end{equation} 
Observe that, by suitably shrinking $K$ around $x_0$, hypothesis (HM1) yields the existence of a positive constant $c_0$ such that $-iB f=iX_0f\geq c_0>0$ on $K$.
We then work in this new compact that we still denote by $K$ and estimate (\ref{GP1}.1) and (\ref{GP1}.2) separately.
As for (\ref{GP1}.1) we have that for all $0\leq j\leq N$, 
\begin{equation}
\label{GEst1}
2\mathsf{Re}(X_j^* f X_ju,iBu)=2\mathsf{Re}( f X_ju,i[X_j,B] u)+2\mathsf{Re}(f X_ju,iB X_ju)
\end{equation}
$$=2\mathsf{Re}( f X_ju,i[X_j,B] u)+2\mathsf{Im}(f X_ju,B X_ju)$$
$$=2\mathsf{Re}( f X_ju,i[X_j,B] u)+\frac1i\Big((f X_ju,B X_ju)-(B X_ju,f X_ju)\Big)$$
$$=2\mathsf{Re}( f X_ju,i[X_j,B] u)+\frac1i\Big((B^* f X_ju, X_ju)-(B X_ju,f X_ju)\Big)$$
$$=2\mathsf{Re}( f X_ju,i[X_j,B] u)+\frac1i\Big(((B f )X_ju,X_ju)$$
$$+(d_B f X_ju,X_ju)+\cancel{(B X_ju,fX_ju)}-\cancel{(B X_ju,f X_ju)}\Big)$$
$$\geq -|\!|f|\!|_{L^\infty(K)}\Big((|\!|d_B|\!|_{L^\infty(K)}+1)|\!|X_j u|\!|_0^2+|\!|[X_j,X_0] u|\!|_0^2\Big)+c_0|\!|X_ju|\!|_0^2.$$
As for the term in (\ref{GP1}.2), we have
\begin{equation}
\label{GEst2}
2\mathsf{Re}\big((X^*_{N+1}-i X^*_0)u,iB u\big)=2\mathsf{Im}(X^*_{N+1}u,Bu)-2\mathsf{Re}(iX^*_0u,iBu)
\end{equation}
$$\underset{}{=}\frac1i\Big((X^*_{N+1}u,Bu)-(Bu,X^*_{N+1} u)\Big)+2\mathsf{Re}(-X_0^*u,Bu)$$
$$=\frac1i\Big((X^*_{N+1}u,Bu)-([X_{N+1},B] u,u)-( X_{N+1}u,B^*u))\Big)$$
$$\hspace{3.5cm}+2\mathsf{Re}(-X_0u,Bu)-2\mathsf{Re}(d_{X_0}u,Bu)$$
(recalling that $X_{N+1}^*=X_{N+1}+d_{X_{N+1}}$ and $X_0^*=X_0+d_{X_0}$, and that $d_{X_{N+1}}$ and
$d_{X_0}$ are purely imaginary, see (\ref{eqAdj}))
$$\underset{(B=-X_0)}{=}\mathsf{Re}\Biggl(\frac1i\Big(-(d_{X_{N+1}}u,X_0u)+([X_{N+1},X_0] u,u)+( X_{N+1}u,d_{X_0}u))\Big)\Biggr)$$
$$\hspace{3.5cm}+2|\!|X_0u|\!|_0^2+2\mathsf{Re}(d_{X_0}u,X_0 u)$$      
$$\geq -\frac{1}{2\delta_0}|\!|d_{X_{N+1}}|\!|^2_{L^\infty(K)}|\!|u|\!|_0^2-\frac{\delta_0}{2}|\!|X_0u|\!|_0^2-\frac{1}{2\delta_1}|\!|u|\!|_0^2$$
$$-\frac{\delta_1}{2}|\!|[X_{N+1},X_0]u|\!|_0^2-\mathsf{Re}( \mathsf{Im}\,d_{X_0}\,X_{N+1}u,u)$$
$$-\frac{1}{\delta_2}|\!|d_{X_0}|\!|_{L^\infty(K)}^2|\!|u|\!|_0^2-\delta_2|\!|X_0u|\!|_0^2+2|\!|X_0u|\!|_0^2.$$

Using \eqref{GEst1} and \eqref{GEst2} in \eqref{GP1}, and recalling that $B=-X_0$ in \eqref{GEst1} gives
$$2\mathsf{Re}(P^*u,iBu) \geq \sum_{j=1}^N\Big(c_0-|\!|f|\!|_{L^\infty(K)}(|\!|d_{X_0}|\!|_{L^\infty(K)}+1)\Big)|\!|X_j u|\!|_0^2$$
$$-|\!|f|\!|_{L^\infty(K)}\sum_{j=1}^N|\!|[X_j,X_0] u|\!|_0^2 -\frac{\delta_1}{2}|\!|[X_{N+1},X_0]u|\!|_0^2$$
$$-\mathsf{Re}(\mathsf{Im}\,d_{X_0}\,X_{N+1}u,u)+\Big(2-\frac{\delta_0}{2}-\delta_2\Big)|\!|X_0u|\!|_0^2$$
$$-\Big( \frac{1}{2\delta_0}|\!|d_{X_{N+1}}|\!|_{L^\infty(K)}^2+\frac{1}{2\delta_1}+\frac{1}{2\delta_2}|\!|d_{X_0}|\!|_{L^\infty(K)}^2\Big)|\!|u|\!|_0^2.$$
Since $x_0\in S$ and $K$ contains $x_0$ in its interior, we may shrink the compact set $K$ around $x_0$ to a compact set, that we still denote by $K$, in such a way that 
$|\!|f|\!|_{L^\infty(K)}$ is so small that $c_0-|\!|f|\!|_{L^\infty(K)}(|\!|d_{X_0}|\!|_{L^\infty(K)}+1)\geq c_0/2$. 
We may then also pick $\delta_0$ and $ \delta_2$ sufficiently small in order that $2-\frac{\delta_0}{2}-\delta_2\geq 7/4$. 
Therefore, with so chosen $\delta_0$ and $\delta_2$, with
$$C(\delta_1)=\frac{1}{2\delta_0}|\!|d_{X_{N+1}}|\!|_{L^\infty(K)}^2+\frac{1}{2\delta_1}+\frac{1}{2\delta_2}|\!|d_{X_0}|\!|_{L^\infty(K)}^2>0,$$
with $c'_0=\min\{c_0/2, 1/4\}$, and recalling $Y$ given in (\ref{eqY}) we get, with $\gamma_0:=c'_0/3$,
\begin{equation}
\label{Final}
2\mathsf{Re}(P^*u,iBu) \geq \frac{c_0}{2}\sum_{j=1}^N|\!|X_j u|\!|_0^2-|\!|f|\!|_{L^\infty(K)}\sum_{j=0}^N|\!|[X_j,X_0] u|\!|_0^2
\end{equation}
$$-\frac{\delta_1}{2}|\!|[X_{N+1},X_0] u|\!|_0^2-C(\delta_1)|\!|u|\!|_0^2+(Yu,u)+\frac 74|\!|X_0u|\!|_0^2$$
$$\geq \gamma_0\underset{(\ref{Final}.1)}{\underbrace{\Biggl(\sum_{j=0}^N\Big(|\!|X_j u|\!|_0^2-\frac{1}{\gamma_0}|\!|f|\!|_{L^\infty(K)}|\!|[X_j,X_0] u|\!|_0^2\Big)+\frac{1}{\gamma_0}(Yu,u)\Biggr)}}$$
$$+\gamma_0\sum_{j=0}^N|\!|X_j u|\!|_0^2+\gamma_0\underset{(\ref{Final}.2)}{\underbrace{\Big(\sum_{j=0}^N|\!|X_ju|\!|_0^2-\frac{\delta_1}{2\gamma_0}|\!|[X_{N+1},X_0]u|\!|_0^2\Big)}}\hspace{1.5cm}$$
$$\hspace{5.5cm}-C(\delta_1)|\!|u|\!|_0^2+ \frac32|\!|X_0u|\!|_0^2.$$
Note that (\ref{Final}.1) can be written as $(\widehat{P}_{\gamma_0,\epsilon(K)}u,u)$, with (recall \eqref{eqPgamma})
$$\widehat{P}_{\gamma_0,\epsilon(K)}
=\sum_{j=0}^N\Big(X_j^*X_j-\frac{1}{\gamma_0}|\!|f|\!|_{L^\infty(K)}[X_j,X_0]^*[X_j,X_0]\Big)+\frac{1}{\gamma_0}Y$$
where $\varepsilon(K)=|\!|f|\!|_{L^\infty(K)}/\gamma_0$ is a positive constant that shrinks to zero when $K$ is shrunk around $x_0$, 
that is, $\varepsilon(K)\rightarrow 0$ as $K\rightarrow \{x_0\}$.

At this point we need the following crucial lemma.

\begin{lemma}\label{lemmaFP-M-G}
Suppose (HM2) and (HM3) hold. Then we may shrink $K$, keeping $x_0$ in its interior, to a compact, that we keep calling $K$,
such that
$\widehat{P}_{\gamma_0,\varepsilon(K)}$ satisfies the Fefferman-Phong inequality on $C_0^\infty(K)$
$$(\widehat{P}_{\gamma_0,\varepsilon(K)} u,u)\geq -C_1|\!|u|\!|_0^2,\,\,\,\,\forall u\in C_0^\infty(K),$$
for some constant $C_1>0$ (depending on $K$).
\end{lemma}
\begin{proof}[Proof of the lemma]  
The proof is obtained exactly in the same way of Lemma 6.1 of \cite{FP}. We first shrink $K$, keeping $x_0$ in its interior, so that by virtue of 
(HM2) and (HM3) the total symbol of $\widehat{P}_{\gamma_0,\varepsilon(K)}$ is bounded from below by a constant in a neighborhood of 
$K\times\mathbb{R}^n$. One then extends the total symbol of $\widehat{P}_{\gamma_0,\varepsilon(K)}$  
to a symbol in the class $S^2_{1,0}(\mathbb{R}^n\times\mathbb{R}^n)$, which is still 
bounded from below. The resulting operator, which is still a \textit{differential} operator, satisfies
the Fefferman-Phong inequality and coincides with $\widehat{P}_{\gamma_0,\varepsilon(K)}$ on $C_0^\infty(K).$ 
This concludes the proof of the lemma.
\end{proof}

Lemma \ref{lemmaFP-M-G} allows to control the term (\ref{Final}.1).

As regards the term (\ref{Final}.2) we can write it as $(Q_1u,u)$ with
$$Q_1:=\sum_{j=0}^NX_j^*X_j-\frac{\delta_1}{2\gamma_0} [X_{N+1},X_0]^* [X_{N+1},X_0].$$
Performing on $Q_1$ the same procedure we used in Lemma \ref{lemmaFP-M-G}, we may choose $\delta_1>0$
so as that for $Q_1$ the same conclusion of Lemma \ref{lemmaFP-M-G} holds on $C_0^\infty(K)$, where $K$ is the
resulting compact containing $x_0$ in its interior.

Therefore, for all $u\in C_0^\infty(K),$
\begin{equation}
2\mathsf{Re}(P^*u,iBu)\geq \gamma_0\sum_{j=0}^N|\!|X_ju|\!|_0^2
+\frac32|\!|X_0 u|\!|_0^2-C|\!|u|\!|_0^2,
\label{eqCarleman}\end{equation}
with $C$ a positive constant (depending on the compact).
This concludes the proof of the proposition.
\end{proof}

\begin{proof}[Proof of Theorem \ref{thmMixedType}]
It is now an easy matter to prove the theorem. Since
$$2\mathsf{Re}(P^*u,iBu)\leq 
|\!|P^*u|\!|_0^2+|\!|X_0u|\!|^2,$$
we have
$$|\!|P^*u|\!|_0^2\geq \gamma_0\sum_{j=0}^N|\!|X_ju|\!|_0^2
+\frac12|\!|X_0 u|\!|_0^2-C|\!|u|\!|_0^2.$$
Finally, by using the Poincar\'e inequality on $X_0$ (which is nonsingular on $S$), and by possibly shrinking once more the compact $K$ around $x_0$, we may absorb
the negative constant $-C$ in front of the $L^2$-norm and obtain,
with a new suitable positive constant $C$,
\begin{equation}
|\!|P^*u|\!|_0^2\geq \gamma_0\sum_{j=0}^N|\!|X_ju|\!|_0^2
+C|\!|u|\!|_0^2,\,\,\forall u\in C_0^\infty(K),
\label{eqSolv}\end{equation}
which yields the estimate that guarantees the local solvability of $P$ in the sense $H^{-s}$ to $L^2$ with
$s=0$ in case (i) and $s=1$ in case (iv) of the statement of the theorem.

It remains to deal with cases (ii) and (iii) of the statement. As for (ii), we use hypothesis (HM4) to exploit the sharp Melin inequality (\ref{eqMelin}) and,
using (\ref{eqSolv}), to get
\begin{equation}
|\!|P^*u|\!|_0^2\geq \gamma_0\sum_{j=0}^N|\!|X_ju|\!|_0^2+C|\!|u|\!|_0^2\geq C'|\!|u|\!|_{1/2}^2,\,\,\forall u\in C_0^\infty(K),
\label{solv0}\end{equation}
and hence the $H^{-1/2}$ to $L^2$ local solvability of $P$ near $x_0$.

As for (iii) we make use of hypothesis (HM5) that, by the subelliptic estimate (\ref{eqHRS}) for H\"ormander's sums of square of
vector fields and (\ref{eqSolv}), gives
\begin{equation}
|\!|P^*u|\!|_0^2\geq \gamma_0\sum_{j=0}^N|\!|X_ju|\!|_0^2+C|\!|u|\!|_0^2\geq C'|\!|u|\!|_{1/r}^2,\,\,\forall u\in C_0^\infty(K),
\label{solv1}\end{equation}
and hence the $H^{-1/r}$ to $L^2$ local solvability of $P$ near $x_0$. This concludes the proof of the theorem.
\end{proof}

\section{Examples of locally solvable mixed-type operators}\label{secExMType}
In this section we will show some examples of operators of mixed-type (\ref{eqP1}) that are locally solvable by virtue of Theorem \ref{thmMixedType}.

\subsection{Example 1.} This is an example of a degenerate Schr\"odinger operator which falls in the mixed-type class, which is $L^2$ to $L^2$ locally solvable.

Let $\Omega_0\subset\mathbb{R}_x^n$ an open set, and consider in $\Omega=\mathbb{R}_t\times\Omega_0$ the Schr\"odinger operator
$$P=f(x)\sum_{j=1}^nD_{x_j}^2+D_t,$$
where $f\in C^\infty(\Omega_0;\mathbb{R})$ is a harmonic function in the $x$-variable
such that $S_0=f^{-1}(0)\not=\emptyset$ and $df\bigl|_{S_0}\not=0.$ 
Therefore the set $S$ in the statement is given here by $S=\mathbb{R}\times S_0$. We therefore think of $f$ as a function of
$(t,x)$ which is constant in the variable $t$. 
Since
$$P=\sum_{j=1}^nD_{x_j}f(x)D_{x_j}+D_t+i\sum_{j=1}^n(\partial_{x_j}f)D_{x_j},$$
we have that $P$ is of the form (\ref{eqP1}) (mixed-type) with $N=n$,
$$X_j=D_{x_j},\,\,1\leq j\leq n,\,\,\,\,X_0=\langle\nabla f(x),D_x\rangle,\,\,\,\,\mathrm{and}\,\,\,\,X_{n+1}=D_t.$$
Since $iX_0f=|\nabla f(x)|^2>0$ on $S$, $\{X_0,X_j\}=-\sum_{k=1}^n(\partial^2 f/\partial x_j\partial x_k)\xi_k,$ $1\leq j\leq n$, $\{X_0,X_{n+1}\}=0$, and
$d_{X_0}=-i\Delta f=0$ by assumption, we have that (HM1), (HM2) and (HM3) are fulfilled. As the characteristic set 
$\Sigma\subset T^*\Omega\setminus 0$ of $\sum_{j=0}^nX_j^*X_j$ is $\{(t,x,\tau,0);\,\,\tau\not=0\}$,
we have that $\pi^{-1}(t_0,x_0)\cap\Sigma\not=\emptyset$ for all $(t_0,x_0)\in\Omega$ and none of conditions (HM4) and (HM5) may hold. Theorem \ref{thmMixedType}(i) thus yields 
that $P$ is $L^2$ to $L^2$ locally solvable near each point of $S$.

\subsection{Example 2.} Consider in $\mathbb{R}^2$ with coordinates $x=(x_1,x_2)$ the functions
$f(x)=x_1-(x_2+x_2^3/3)$ and $g=g(x_2)=1+x_2^2$. For $\alpha>1$ a constant, let
$$X(x,\xi)=g(x_2)\xi_1+\xi_2,\,\,\,\,X_0(x,\xi)=\alpha\xi_1+\frac{\xi_2}{g(x_2)},$$
let
$$X_1(x,\xi)=\sqrt{g(x_2)}\frac{X(x,\xi)}{\sqrt{1+g(x_2)^2}},\,\,\,\,X_2(x,\xi)=\frac{1}{\sqrt{g(x_2)}}\frac{X(x,\xi)}{\sqrt{1+g(x_2)^2}},$$ 
and let
$$X_3(x,\xi)=\mu_1(x)X(x,\xi)+\mu_2(x)X_0(x,\xi),$$
with $\mu_1,\mu_2\in C^\infty$ real valued.
Consider 
%the smooth matrix $A(x_2)=[a_{jj'}(x_2)]_{1\leq j,j'\leq 2}=\left[\begin{array}{cc}g(x_2)&1\\1&1/g(x_2)\end{array}\right],$ and 
the operator
$$P=\sum_{j=1}^2X_j^*fX_j+iX_0+X_3.$$
Since 
$$iX_0f(x)=\alpha-1>0,\,\,\mathrm{and}\,\,\{X_0,X\}(x,\xi)=\frac{\{\xi_2,g\}(x_2)}{g(x_2)^2}X(x,\xi),$$
we have that also $\{X_0,X_3\}$ is a smooth multiple of $X$ and hence that conditions (HM1), (HM2) and (HM3) are fulfilled.
Therefore Theorem \ref{thmMixedType}(i) yields that $P$ is $L^2$ to $L^2$ locally solvable at each point of $S=f^{-1}(0)$.
Note that conditions (HM4) and (HM5) cannot hold in this case.

\subsection{Example 3.} Consider in $\mathbb{R}^3$ with coordinates $x=(x_1,x_2,x_3)$ the operators
$$X_1=D_{x_1},\,X_2=x_1^kD_{x_3},\,X_3=\beta(x)D_{x_1},\,X_0=D_{x_2},$$
where $k\geq 1$ is an integer and $\beta\in C^\infty(\mathbb{R}^3;\mathbb{R})$.
 Let $f(x)=x_2+g(x_1,x_3)$ and let
$$P=\sum_{j=1}^2X_j^*fX_j+iX_0+X_3.$$
It is clear that (HM1) is fulfilled. Since $\{X_0,X_j\}=0,$ $j=1,2,$ and because of the assumption on $\beta$, condition (HM2) is fulfilled,
and by virtue of the fact that 
$d_{X_0}(x)=0,$ we have that also (HM3) is satisfied. As $X_0$, $X_1$ and $X_2$ satisfy the H\"ormander condition
at step $r=k+1\geq 2$ either condition (HM4), when $k=1$, or condition (HM5), when $k\geq 2$, holds
so that Theorem \ref{thmMixedType}(ii) (when $k=1$) or (iii) (when $k\geq 2$) yields that $P$ is $H^{-1/r}$ to $L^2$ locally solvable at
$S=f^{-1}(0)$.  

\subsection{Example 4.} Consider in $\mathbb{R}^3$ with coordinates $x=(x_1,x_2,x_3)$ an open set $\Omega$ intersecting the
plane $x_1=-1$, and the operators $X_j(x,D)$, $0\leq j\leq 3$, with symbols
$$X_0(x,\xi)=\xi_2-x_1\xi_3,\,X_1(x,\xi)=\xi_1-x_3\xi_3,\,\,X_2(x,\xi)=(1+x_1)\xi_3,$$
$$X_3(x,\xi)=\sum_{j=0}^2\Bigl(\beta_j(x)X_j(x,\xi)+\gamma(x)\{X_0,X_j\}(x,\xi)\Bigr),\,\,\beta_1,\beta_2,\gamma\in C^\infty(\Omega;\mathbb{R}).$$ 
We have $d_{X_0}=0$ and
\begin{equation}
\{X_1,X_0\}=-X_2,\,\,\{X_1,X_2\}=(2+x_1)\xi_3,\,\,\{X_2,X_0\}=0.
\label{eqRelationsEx4}\end{equation}
Let $f(x)=x_2+x_2^3/3-x_1x_3.$ Then (HM1) holds. As a consequence of the definition of $X_3$ and of the relations (\ref{eqRelationsEx4}) 
we have that $\{X_0,X_3\}$ is controlled (on the fibers of compact sets of
$\Omega$) by $X_0,$ $X_1$ and $X_2$, whence (HM2) and (HM3) are all satisfied. 
Let $\Omega_\pm:=\Omega\cap\{x_1\gtrless -1\}.$ Note that
since $(x,\xi)\in\Sigma\Rightarrow \xi_3\not=0$ (otherwise we are in the zero-section of $T^*\Omega$), we have
\begin{itemize}
\item[(a)] $\pi^{-1}(\Omega_\pm)\cap\Sigma=\emptyset$,
\item[] while
\item[(b)] if $x_0=(-1,x^0_2,x^0_3)\in\Omega$ then
$$\pi^{-1}(x_0)\cap\Sigma=\{(x_0,\xi)\in T^*\Omega\setminus 0;\,\,\xi_1=x_3^0\xi_3,\,\xi_2=-\xi_3,\,\xi_3\not=0\}\not=\emptyset.$$
\end{itemize}
In case (a) we have that for any given $x_0\in f^{-1}(0)\cap\Omega_\pm$ Theorem \ref{thmMixedType}(iv) yields that $P=\sum_{j=1}^2X_j^*fX_j+X_3+iX_0$ is
$H^{-1}$ to $L^2$ locally solvable near $x_0$.

In case (b), any given $x_0\in f^{-1}(0)\cap\Omega\cap\{x_1=-1\}$  has a fiber which contains characteristic points, and
we may find a (connected) open neighborhood $V\subset\Omega$ of $x_0$ such that
in $\pi^{-1}(V)\cap\Sigma$ the Hamilton fields $H_{X_0},$ $H_{X_1}$ and $H_{X_2}$ are linearly independent and the relations (\ref{eqRelationsEx4})
grant the validity of (HM4) at $x_0$ (and hence for all $\rho\in\Sigma$ with $\pi(\rho)$ belonging to a neighborhood of $x_0$). 
Therefore Theorem \ref{thmMixedType}(ii) yields that $P$ is $H^{-1/2}$ to $L^2$ locally solvable near $x_0$.

\section{The Schr\"odinger-type case}\label{sec3}

Let now $P$ be an operator of the form (\ref{eqP2}), that is,
$$P=\sum_{j=1}^NX_j^*f_jX_j+X_{N+1}+a_0,$$
where, recall, $f_1,\ldots,f_N\in C^\infty(\Omega;\mathbb{R})$. 
Note that the subprincipal symbol of $P$ is given by
$$X_{N+1}(x,\xi)+\sum_{j=1}^N\Biggl(\mathsf{Im}\Bigl((\bar{X}_jf_j)(x)X_j(x,\xi)\Bigr)-\frac{i}{2}f_j(x)\{\bar{X}_j,X_j\}(x,\xi)\hspace{1.5cm}$$
$$\hspace{6.5cm}-f_j(x)\mathsf{Re}\Bigl(\overline{d_{X_j}(x)}X_j(x,\xi)\Bigr)\Biggr).$$

In this section we make the following hypotheses:

\begin{itemize}
\item[(HS1)] $X_1,\ldots,X_N$ have \textit{complex} coefficients;
\item[(HS2)] For all $x_0\in\Omega$ there exists a connected neighborhood $V_{x_0}\subset\Omega$ of $x_0$ and a function $g\in C^\infty(V_{x_0};\mathbb{R})$
such that 
\begin{itemize}
\item[(i)] $X_jg=0$ on $V_{x_0}$ for all $1\leq j\leq N$;
\item[(ii)] $X_{N+1}g\not=0$ on $V_{x_0}$.
\end{itemize}
\end{itemize}

\begin{remark}\label{remthmComplex}
Note that once a function $g$ has been found to satisfy (HS2) (i) and (ii), one may change 
the sign of $g$ so as to have $iX_{N+1}g>0$ on $V_{x_0}$.
\end{remark}

One has the following result.

\begin{theorem}\label{thmComplex}
Let $P$ be of the form \eqref{eqP2} such that conditions (HS1) and (HS2) are satisfied. 
Then for all $x_0\in\Omega$ there exists a compact set $K$ containing $x_0$ in its interior 
such that the operator $P$ is $L^2$ to $L^2$ locally solvable in $\mathring{K}$ (the interior of $K$).
\end{theorem}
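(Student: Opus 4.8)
The goal is to establish an a priori estimate of the form
$$\|P^*u\|_0^2\geq C_K\|u\|_0^2,\qquad\forall u\in C_0^\infty(K),$$
from which $L^2$ to $L^2$ local solvability follows by the standard functional-analytic argument (Hahn--Banach plus Riesz representation). Since $P$ is of Schr\"odinger type and no sums-of-squares lower bound is available for the principal part (as the authors stress in the introduction, the positive commutator method of \cite{FP} breaks down here), the estimate must come from a genuine Carleman-type computation, and hypothesis (HS2) is precisely what provides the weight.

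\smallskip
\noindent\textbf{Step 1: set-up via the first integral.} Fix $x_0\in\Omega$, take the neighborhood $V_{x_0}$ and the real function $g$ from (HS2), and (using Remark \ref{remthmComplex}) normalize $g$ so that $iX_{N+1}g>0$ on $V_{x_0}$. Choose a small compact $K\subset V_{x_0}$ with $x_0\in\mathring K$. The multiplier will be $-iX_{N+1}u$ (compare with the multiplier $-iX_0u$ used in the mixed-type case, Proposition \ref{propKey}); I would estimate $2\,\mathsf{Re}(P^*u,-iX_{N+1}u)$ from below, and separately bound it above by $\|P^*u\|_0^2+\|X_{N+1}u\|_0^2$.

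\smallskip
\noindent\textbf{Step 2: the key cancellation.} Expand $2\,\mathsf{Re}(P^*u,-iX_{N+1}u)=\sum_{j=1}^N 2\,\mathsf{Re}(X_j^*f_jX_ju,-iX_{N+1}u)+2\,\mathsf{Re}((X_{N+1}^*+\bar a_0)u,-iX_{N+1}u)$. In the term $2\,\mathsf{Re}((X_{N+1}^*+\bar a_0)u,-iX_{N+1}u)$, using $X_{N+1}^*=X_{N+1}+d_{X_{N+1}}$ with $d_{X_{N+1}}$ purely imaginary, one gets a leading contribution $2\|X_{N+1}u\|_0^2$ plus lower-order terms controlled by $\delta\|X_{N+1}u\|_0^2+C_\delta\|u\|_0^2$. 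The crucial part is the sum over $j$: writing $2\,\mathsf{Re}(X_j^*f_jX_ju,-iX_{N+1}u)=2\,\mathsf{Re}(f_jX_ju,-i[X_j,X_{N+1}]u)+2\,\mathsf{Re}(f_jX_ju,-iX_{N+1}X_ju)$, the second piece, after moving $X_{N+1}$ across, produces a term with $iX_{N+1}f_j=(X_{N+1}g)\cdot(\text{something})$ -- here is where (HS2)(i), i.e. $X_jg=0$, must be exploited. The point of the first integral $g$ is that, after a preliminary change of unknown $u\mapsto e^{\lambda g}u$ (or equivalently conjugating $P$ by $e^{\lambda g}$), the terms $X_j$ pick up no extra factor of $\lambda$ because $X_jg=0$, whereas $X_{N+1}$ picks up $\lambda X_{N+1}g$, which has a definite sign on $K$. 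So I would first conjugate, $P_\lambda:=e^{\lambda g}Pe^{-\lambda g}$, note that $P_\lambda=\sum X_j^*f_jX_j+X_{N+1}+a_0+i\lambda(X_{N+1}g)$ (the sums-of-squares part being \emph{unchanged} by (HS2)(i), modulo the zeroth order $d_{X_j}$ contributions which stay bounded), and then run the multiplier argument with $-iX_{N+1}$ on $P_\lambda^*$.

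\smallskip
\noindent\textbf{Step 3: choosing $\lambda$ and closing.} After the conjugation, the gain is a term $2\lambda(\mathsf{Re}\,i(X_{N+1}g)u,-iX_{N+1}u)$-type expression, or more cleanly a positive term $c\lambda\|u\|_0^2$ coming from pairing $i\lambda(X_{N+1}g)u$ against itself, competing against the $O(1)$ error terms $C_K\|u\|_0^2$ and against the need to absorb the $\sum_j 2\,\mathsf{Re}(f_jX_ju,\cdot)$ contributions. Since $f_j(x_0)$ need not vanish (unlike $f$ in the mixed-type case, there is no zero-set hypothesis on the $f_j$ here), the terms $\|f_j\|_{L^\infty(K)}\|X_ju\|_0^2$ are \emph{not} small; however they are controlled by $\delta\|X_ju\|_0^2$-type splittings, and crucially the $\|X_ju\|_0^2$ that appear with a \emph{good} sign (from $2\,\mathsf{Re}(f_jX_ju,-iX_{N+1}X_ju)$ after integrating by parts) must dominate them -- this forces $K$ to be chosen small enough that the commutator and $d_{X_j}$ terms are under control, but does \emph{not} require $\|f_j\|_{L^\infty(K)}$ small. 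Then taking $\lambda$ large enough absorbs the residual $C_K\|u\|_0^2$, yielding
$$2\,\mathsf{Re}(P_\lambda^*u,-iX_{N+1}u)\geq c_K\|u\|_0^2+\|X_{N+1}u\|_0^2,$$
hence $\|P_\lambda^*u\|_0^2\geq c_K\|u\|_0^2$, and undoing the conjugation (which is an isomorphism on $C_0^\infty(K)$ preserving $L^2$ norms up to constants depending on $\lambda$ and $K$) gives the desired estimate for $P^*$.

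\smallskip
\noindent\textbf{Main obstacle.} The delicate point is the sum-over-$j$ term $\sum_j 2\,\mathsf{Re}(X_j^*f_jX_ju,-iX_{N+1}u)$: one must verify that, because $X_jg=0$, conjugation by $e^{\lambda g}$ leaves this part (to leading order) $\lambda$-independent, so that the large parameter $\lambda$ only feeds the good zeroth-order term and the $X_{N+1}$-derivative term, never amplifying the potentially large $f_j$-contributions. Making this precise requires carefully tracking the zeroth-order pieces $d_{X_j}$, $d_{\bar X_j}$ and the subprincipal symbol of $P$ recorded before the theorem statement, and checking that all of them remain $O(1)$ after conjugation. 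The \emph{complex} nature of the $X_j$ (hypothesis (HS1)) means $X_j^*=\bar X_j+d_{\bar X_j}$ rather than $X_j+d_{X_j}$, so the integrations by parts produce commutators $[\bar X_j,X_{N+1}]$ and terms $\{\bar X_j,X_j\}$ that must be absorbed; these are where the estimate is tightest, and the reason one only gets $L^2$ to $L^2$ and no smoothing gain.
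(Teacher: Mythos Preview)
Your approach has a genuine gap. The multiplier $-iX_{N+1}u$ (whether applied before or after conjugation by $e^{\lambda g}$) produces, from each summand $X_j^*f_jX_j$, the term
\[
2\,\mathsf{Re}(f_jX_ju,-iX_{N+1}X_ju)=\bigl((iX_{N+1}f_j)\,X_ju,X_ju\bigr)+\text{lower order},
\]
and $iX_{N+1}f_j$ carries \emph{no} sign hypothesis---the $f_j$ are arbitrary real functions. So there is no ``good-sign $\|X_ju\|_0^2$'' term available to dominate the commutator pieces $2\,\mathsf{Re}(f_jX_ju,-i[X_j,X_{N+1}]u)$, and these cannot be absorbed by anything else in the estimate (you control neither $\|X_ju\|_0$ nor $\|[X_j,X_{N+1}]u\|_0$). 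Conjugating by $e^{\lambda g}$ first does not help with this block, precisely because (HS2)(i) makes the second-order part $\lambda$-independent; the large parameter only feeds the zeroth-order and $X_{N+1}$ terms, exactly as you say, and hence cannot rescue the uncontrolled second-order contributions. This is the content of Remark~\ref{remSchroedinger}: with a general multiplier $B$ the terms $\mathsf{Im}(X_j\varphi,f_j[X_j,B]\varphi)$ are fatal, and $B$ must be chosen so that they disappear.

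The paper's proof uses a \emph{zeroth-order} multiplier instead: one computes $\mathsf{Im}(e^{\lambda g}P^*\varphi,e^{\lambda g}\varphi)$ directly. The decisive observation is that each second-order summand contributes
\[
\mathsf{Im}\bigl(f_jX_j\varphi,\,X_j(e^{2\lambda g}\varphi)\bigr)=\mathsf{Im}\bigl(f_jX_j\varphi,\,e^{2\lambda g}X_j\varphi\bigr)=0,
\]
since $X_j(e^{2\lambda g})=0$ by (HS2)(i) and the remaining inner product is real ($f_j$ and $e^{2\lambda g}$ being real). The second-order part thus vanishes \emph{identically}, not merely up to controllable errors, and the estimate reduces to the $X_{N+1}^*$ and $\bar a_0$ terms; (HS2)(ii) then gives $\lambda c_0\|e^{\lambda g}\varphi\|_0^2$ from the former, and taking $\lambda$ large closes the argument in a few lines. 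The exact cancellation---rather than a domination argument---is the correct mechanism by which (HS2)(i) is used.
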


\begin{proof}
We have to obtain an $L^2$ a priori estimates for the adjoint $P^*$, which may be written as
$$P^*=\sum_{j=1}^NX_j^*f_jX_j+X_{N+1}^*+\bar{a}_0,$$
where it is important to note that since $X_{N+1}(x,\xi)=\overline{X_{N+1}(x,\xi)}$, then $X_{N+1}(x,D)^*=X_{N+1}(x,D)+d_{X_{N+1}}(x)$.
Let now $x_0\in S$ and let $K\subset V_{x_0}$ be a compact set containing $x_0$ in its interior $\mathring{K}=:U$. Let $g$ be a function such that (HS2-i) and (HS2-ii) hold
with $iX_{N+1}g>0$ on $K$.
For $\varphi\in C_0^\infty(K)$ and $\lambda>0$  to be picked later on we consider
\begin{equation}
\label{CP*}
\mathsf{Im}\,(e^{\lambda g}P^*\varphi,e^{\lambda g}\varphi)=\sum_{j=1}^N\underset{(\ref{CP*}.1)}{\mathsf{Im}\,(e^{\lambda g}X_j^*f_jX_j\varphi, e^{\lambda g}\varphi)}
\end{equation}
$$\hspace{5cm}+\underset{(\ref{CP*}.2)}{\mathsf{Im}\,(e^{\lambda g}X_{N+1}^*\varphi,e^{\lambda g}\varphi)}+\underset{(\ref{CP*}.3)}{\mathsf{Im}\,(e^{\lambda g}\bar{a}_0\varphi,e^{\lambda g}\varphi)}.$$
We separately estimate the three terms $(\ref{CP*}.1)$, $(\ref{CP*}.2)$ and $(\ref{CP*}.3)$ in \eqref{CP*}. 

As regards (\ref{CP*}.1), for all $1\leq j\leq N$ and for all $\varphi\in C_0^\infty(K)$ we have
\begin{equation}
\mathsf{Im}\,(e^{\lambda g}X_j^*f_jX_j\varphi, e^{\lambda g}\varphi)=\mathsf{Im}\,(f_jX_j\varphi, X_j(e^{2\lambda g}\varphi))
\label{term1}\end{equation}
$$=\mathsf{Im}\,(f_jX_j\varphi, 2\lambda (X_jg)e^{2\lambda g}\varphi)+\mathsf{Im}\,(f_jX_j\varphi, e^{2\lambda g}X_j\varphi)=0,$$
because $X_jg=0$ and $(f_jX_j\varphi, e^{2\lambda g}X_j\varphi)\in\mathbb{R}$.

As regards (\ref{CP*}.2), for all $\varphi\in C_0^\infty(K)$ we have
$$\mathsf{Im}\,(e^{\lambda g}X_{N+1}^*\varphi,e^{\lambda g}\varphi)=\mathsf{Im}\,(\varphi,X_{N+1}(e^{2\lambda g}\varphi))$$
$$=\mathsf{Im}\,(\varphi,2\lambda(X_{N+1}g)e^{2\lambda g}\varphi)+\mathsf{Im}\,(\varphi,e^{2\lambda g}X_{N+1}\varphi)$$
$$=\mathsf{Im}\,(\varphi,2\lambda(X_{N+1}g)e^{2\lambda g}\varphi)+\mathsf{Im}\,(\varphi,e^{2\lambda g}X_{N+1}^*\varphi)-\mathsf{Im}\,(\varphi,d_{X_{N+1}}e^{2\lambda g}\varphi).$$
Therefore 
$$\mathsf{Im}\,(e^{\lambda g}X_{N+1}^*\varphi,e^{\lambda g}\varphi)=\frac12\Bigl[\mathsf{Im}\,i(\varphi,2\lambda(iX_{N+1}g)e^{2\lambda g}\varphi)
+\mathsf{Im}\,(d_{X_{N+1}}e^{2\lambda g}\varphi,\varphi)\Bigr].$$
Since $iX_{N+1}g>0$ near $x_0$, there exists a compact set $K_0\subset V_{x_0}$ containing $x_0$ in its interior and a positive constant $c_0$ such that $iX_{N+1}f\geq c_0$ on $K_0$.
We can then shrink the compact set $K$ around $x_0$ to a compact contained in $K_0$, that we keep denoting by $K$, in such a way that for $\lambda >0$
and for all $\varphi\in C_0^\infty (K)$ we have
$$\mathsf{Im}\,i(\varphi,2\lambda(iX_{N+1}g)e^{2\lambda g}\varphi)\geq 2\lambda c_0|\!|e^{\lambda g}\varphi|\!|_0^2,$$
and thus
\begin{equation}
\mathsf{Im}\,(e^{\lambda g}X_{N+1}^*\varphi,e^{\lambda g}\varphi)\geq  \lambda c_0 |\!|e^{\lambda g}\varphi|\!|_0^2-\frac12|\!|d_{X_{N+1}}|\!|_{L^\infty(K)}|\!|e^{\lambda g}\varphi|\!|_0^2.
\label{term2}\end{equation}

As for the term (\ref{CP*}.3), we have for all $\varphi\in C_0^\infty (K)$
\begin{equation}
\mathsf{Im}\,(e^{\lambda g}\bar{a}_0\varphi,e^{\lambda g}\varphi)\geq -|\!|a_0|\!|_{L^\infty(K)}|\!| e^{\lambda g}\varphi|\!|_0^2,
\label{term3}\end{equation}
whence, by inserting \eqref{term1}, \eqref{term2} and \eqref{term3} into \eqref{CP*}, we find that for all
$\varphi\in C_0^\infty(K)$ and all $\lambda>0$ 
\begin{equation}
\mathsf{Im}\,(e^{\lambda g}P^*\varphi,e^{\lambda g}\varphi)\geq \Big(\lambda c_0- \frac{|\!| d_{X_{N+1}}|\!|_{L^\infty(K)}}{2}-|\!|a_0|\!|_{L^\infty(K)}\Big) |\!| e^{\lambda g}\varphi|\!|_0^2.
\label{final}\end{equation}
Fixing $\lambda>0$ sufficiently large yields the existence of $C>0$ such that
$$|\!|P^*\varphi|\!|_0^2\geq C|\!|\varphi|\!|_0^2,\,\,\,\,\forall\varphi\in C_0^\infty(K),$$
and concludes the proof of the theorem.
\end{proof}

\begin{remark}\label{remSchroedinger}
Suppose $B\colon C_0^\infty(V_{x_0})\longrightarrow C_0^\infty(V_{x_0})$ is a zeroth order properly supported pseudodifferential operator such that $B^*=B+R,$ where $R$ is a smoothing operator.
One then has 
$$\mathsf{Im}(P^*\varphi,B\varphi)=\sum_{j=1}^N\mathsf{Im}(X_j\varphi,f_j[X_j,B]\varphi)+\frac12\sum_{j=1}^N\mathsf{Im}(X_j\varphi,[f_j,B]X_j\varphi)$$
$$\hspace{2.5cm}+\mathsf{Im}(\varphi,[X_{N+1},B]\varphi)+O(|\!|\varphi|\!|_0^2),$$
where in $O(|\!|\varphi|\!|_0^2)$ we have the contributions of $[R,X_j]\varphi$, $[R,X_{N+1}]\varphi$ and $[B,d_{N+1}]\varphi$. The first two terms to the right give problems, for one
is not able to control norms of the kind $|\!|X_j\varphi|\!|_0$, the only usable term being given by the third one. 
This suggests that, in this setting, to be able to exploit condition (HS2-ii)
a resonable choice of $B$ is indeed $B=e^{\lambda g}$. 
\end{remark}

\section{Examples of locally solvable Schr\"odinger-type operators}\label{secExSType}

In this section we exhibit some examples to which Theorem \ref{thmComplex} can be applied to conclude $L^2$ to $L^2$ local solvability.

\subsection{Example 1.} In $\mathbb{R}_t\times\mathbb{R}_x^n\times\mathbb{R}_y^m$ we consider the operators
$$P_1=-\Delta_x-\Delta_y+D_t,\,\,\,\,P_2=-\Delta_x+\Delta_y+D_t,\,\,\,\,P_3=f_1(t)\Delta_x+f_2(t)\Delta_y+D_t,$$
where $f_1,$ $f_2$ are smooth, non-identically zero functions of $t$ only. Then $P_1,$ $P_2$ and $P_3$ are all $L^2$ to $L^2$ locally solvable.

\subsection{Example 2.} This example is related to the so-called Mizohata structures (see \cite{T1} or \cite{BCH}). 
Let $\Omega_0\subset\mathbb{R}^n_x\times\mathbb{R}_y$ be an open set and
consider in $\mathbb{R}_t\times\mathbb{R}^n_x\times\mathbb{R}_y$ the open set 
$\Omega=\mathbb{R}_t\times\Omega_0.$ Let $Q=Q(x)$ be a real-valued quadratic form and let
$$X_j=D_{x_j}-i\frac{\partial Q}{\partial x_j}(x)D_y,\,\,\,\,1\leq j\leq n.$$
Let $Y=Y(x,y,D_x,D_y)$ be a first order homogeneous differential operator with real symbol and finally let
$$X_{N+1}=D_t+Y.$$
Then the function $g=g(t)=t$ satisfies the assumptions (HS1) and (HS2) and the operator 
$$P=\sum_{j=1}^nX_j^*f_jX_j+X_{N+1}+a_0$$
is $L^2$ to $L^2$ locally solvable near each point of $\Omega$, whatever the choice of the (non-identically zero)
$f_j\in C^\infty(\Omega;\mathbb{R})$ (and of $a_0\in C^\infty(\Omega;\mathbb{C})$).

\subsection{Example 3.}
In $\mathbb{R}^4$ with coordinates $x=(x_1,x_2,x_3,x_4)$ let $\Omega\subset\mathbb{R}^4$ be open and let
$$X_1=D_1-i\frac{x_2}{2}D_3,\,\,\,\,X_2=D_2+i\frac{x_1}{2}D_3,\,\,\,\,X_3=D_4+\alpha(x)D_3,$$
where $\alpha\in C^\infty(\Omega;\mathbb{R}).$
Then, choosing $g=g(x_4)=x_4$ we have that, whatever the (non-identically zero) functions $f_1,f_2\in C^\infty(\Omega;\mathbb{R})$ 
(and of $a_0\in C^\infty(\Omega;\mathbb{C})$), the operator
$$P=X_1^*f_1X_1+X_2^*f_2X_2+X_3+a_0$$
is $L^2$ to $L^2$ locally solvable near each point of $\Omega$.  

\begin{remark}
The point in the Examples 2 and 3 above is to work with a ``cylindric'' geometry, in which a system of \textit{complex} vector fields $\boldsymbol{X}=\{iX_1,\ldots,iX_N\}$ 
is given to be locally tangent (in the sense that the real parts and the imaginary parts of the vector fields are tangent) to the level sets $L_c=g^{-1}(c)$ of some smooth real-valued function $g$, 
the \textit{real} vector field $iX_{N+1}$ being transverse to the $L_c$ (for $c$ near some regular value $c_0$ of $g$). 
One may very well choose the system $\boldsymbol{X}$ to be a locally involutive system or, more specifically, 
spanning a hypo-analityc structure in the sense of \cite{T1} on each level set $L_c$, with at least one real first-integral. 
Keeping the vector field $iX_{N+1}$ transverse to the $L_c$, one may then think of $P$ as an evolution operator associated with
the involutive/hypo-analytic structure on the leaves $L_c$ in the direction $iX_{N+1}$.
\end{remark}

\begin{remark}
The operators considered in Sections \ref{sec3} and \ref{secExSType} resemble very much the 
Schr\"odinger operator $D_t+\Delta_x$. In studying them one gives up all possible 
extra information coming from lower order terms, that might interfere with the term $D_t$. This explains, to some extent, the local
$L^2$ existence result.
\end{remark}

%%%%%%%%%%%%%%%%%%%%%%%%%%
\vspace{.5cm}

%%%%%%%%%%%%%%%%%%%%%%%%%%
%%%%%%%%%%%%%%%%%%%%%%%%%%

\end{document}